%%%%%%%%%%%%%%%%%%%%%%%%%%%%%%%%%%%%%%%%%%%%%%%%%%%%%%%%%%%%%%%%%%%%%%
%% GIT Stability of H\'{e}non maps
%% Chong Gyu Lee  <cglee@ssu.ac.kr>
%% Joseph H. Silverman <jhs@math.brown.edu>  
%% . ~/.bash_profile
%%%%%%%%%%%%%%%%%%%%%%%%%%%%%%%%%%%%%%%%%%%%%%%%%%%%%%%%%%%%%%%%%%%%%%

\documentclass[10pt]{amsart}
\usepackage{amsmath,amsthm,amscd}
\usepackage{amssymb}
\usepackage{url}
%% \usepackage{tcolorbox}

%% \usepackage{epsfig, mathrsfs}
%% \usepackage[all, knot]{xy}
%% \xyoption{arc}

%% \usepackage{graphicx}
%% \DeclareGraphicsExtensions{.pdf,.png,.jpg,.gif}

%% \usepackage[normalem]{ulem}
%% \renewcommand{\baselinestretch}{\baselinestretch}
%% \renewcommand{\baselinestretch}{1.2}
%% \baselineskip=16pt

%% \setlength\topmargin{-0.5in}
%% \setlength\headheight{0.2in}
%% \setlength\headsep{0.2in}
%% \setlength\textheight{8.5in}
%% \setlength\textwidth{6.5in}
%% \setlength\oddsidemargin{0in}
%% \setlength\evensidemargin{0in}
%% \setlength\parindent{0.25in}
%% \linespread{1.5}
%% \setlength\parskip{0.25in}

\theoremstyle{plain}
    \newtheorem{theorem}{Theorem}[section]
    
    \newtheorem{corollary}[theorem]{Corollary}
    
    \newtheorem{lemma}[theorem]{Lemma}

\theoremstyle{definition}
    \newtheorem{definition}[theorem]{Definition}
    
    \newtheorem{remark}[theorem]{Remark}

\newcommand{\PP}{\mathbb{P}}
\newcommand{\RR}{\mathbb{R}}
\renewcommand{\AA}{\mathbb{A}}

\newcommand{\GG}{\mathbb{G}}

\newcommand{\Ocal}{\mathcal{O}}
\newcommand{\Aut}{\operatorname{Aut}}
\newcommand{\Crit}{\operatorname{Crit}}

\newcommand{\SL}{\operatorname{SL}}
\newcommand{\Rat}{\operatorname{Rat}}
\newcommand{\bfi}{{\boldsymbol{i}}}
\newcommand{\bfX}{{\boldsymbol{X}}}
\newcommand{\bfP}{{\boldsymbol{P}}}
\newcommand{\xmark}{{\boldsymbol\times}}
\newcommand{\Case}[2]{\par\noindent\framebox{\textbf{Case #1}: $\boldsymbol{#2}$}\par\noindent\ignorespaces}
\newcommand{\Pbar}{{\bar{P}}}
\newcommand{\fbar}{{\bar{f}}}
\newcommand{\gbar}{{\bar{g}}}
\newcommand{\bfx}{{\boldsymbol{x}}}
\newcommand{\stab}{{\textup{stab}}}
\newcommand{\semistab}{{\textup{ss}}}
\newcommand{\Moduli}{\mathcal{M}}

%%%%%%%%%%%%%%%%%%%%%%%%%%%%%%%%%%%%%%%%%%%%%%%%%%%%%%%%%%%%%%%%%%%%%%%%%%%%%%
\begin{document}

\title{GIT Stability of H\'{e}non maps}

\author[C. G. Lee]{Chong Gyu Lee}
\address{Department of Mathematics, Soongsil University, Seoul 157642 Korea}
\email{cglee@ssu.ac.kr}

\author[J. H. Silverman]{Joseph H. Silverman}
\address{Department of Mathematics, Brown University, Providence, RI 02906 US.
  ORCID: https://orcid.org/0000-0003-3887-3248}
\email{jhs@math.brown.edu}

\date{\today}

\subjclass[2010]{Primary:  37P30 Secondary: 11G50, 14G30, 32H50,  37P05}

\keywords{H\'{e}non map, dynamical moduli space, semistable map, unstable map}

\begin{abstract}
In this paper we study the locus of generalized degree $d$ H{\'e}non
maps in the parameter space $\operatorname{Rat_d^N}$ of degree $d$
rational maps $\mathbb{P}^N\to\mathbb{P}^N$ modulo the conjugation
action of $\operatorname{SL_{N+1}}$.  We show that H{\'e}non maps are
in the GIT unstable locus if $N\ge3$ or $d\ge3$, and that they are
semistable, but not stable, in the remaining case of $N=d=2$.  We also
give a general classification of all unstable maps in
$\operatorname{Rat}_2^2$.
\end{abstract}

\thanks{The first author is supported by Basic Science Research
  Program through the National Research Foundation of Korea(NRF)
  funded by the Ministry of Education. (NRF-2016R1D1A1B01009208)}

\maketitle

%%%%%%%%%%%%%%%%%%%%%%%%%%%%%%%%%%%%%%%%%%%%%%%%%%%%%%%%%%%%%%%%%%%%%%
\section{Introduction}
%%%%%%%%%%%%%%%%%%%%%%%%%%%%%%%%%%%%%%%%%%%%%%%%%%%%%%%%%%%%%%%%%%%%%%
When extending the theory of the dynamics of endomorphisms of~$\PP^2$
to more general rational maps, a standard collection of maps to study
are H{\'e}non  maps.  These maps were originally studied in dimension~$2$,
where they have the form
\begin{equation}
  \label{eqn:deffP}
  f_P(x,y) = \bigl(ay, bx + P(y) \bigr)\quad\text{with $a,b\in K^*$,
    $P(y)\in K[y]$, $\deg P=d\ge2$.}
\end{equation}
H{\'e}non  maps were the first maps shown to exhibit strange attractors in
their real dynamics. They have also received considerable attention in
the study of arithmetic and algebraic dynamics, since they share many
properties with endomorphisms, including the height-boundeness of the
set of preperiodic
points~\cite{denis:periodicaffineaut,marcello:CR2000}, the existence
of a canonical
height~\cite{ingramhenon2011,arxiv0405007,arxiv0909.3573,arxiv0909.3107,silverman:henonmap},
and equidistribution of periodic points~\cite{MR3127046}.

Various authors have considered generalizations of classical H{\'e}non  maps
to dimension greater than~$2$. For example, dynamics over~$\RR$ for
maps of the form
\begin{equation}
  \label{eqn:fPdefAN}
  f_P(x_1,\ldots,x_N)=\bigl(b_2x_2,b_3x_3,\ldots,b_Nx_N,b_1x_1+P(x_N)\bigr)
\end{equation}
have attracted attention; see for
exampe~\cite{baierklein1990,richter2002,sprott2006}. These papers
concentrate mainly on real dynamics and quadratic maps. Our aim in
this paper is to study stability properties, in the sense of geometric
invariant theory, of quite general H{\'e}non type maps, as given in the
following definition.

\begin{definition}
\label{definition:Henonx}
Let~$K$ be an algebraically closed field of characteristic~$0$,
let $N\ge{k}\ge{2}$ and $d\ge2$, and fix 
scalars~$b_1,\ldots,b_N\in{K^*}$ and polynomials
\[
   P_{i+1} \in K[x_k,\ldots,x_i],  \quad k\le i\le N,\quad \max_{k\le i\le N} \deg{P_{i+1}}=d.
\]
The associated \emph{generalized H{\'e}non  map} is the affine
automorphism $f_\bfP:\AA^N\to\AA^N$ defined by
\begin{equation}
  \label{eqn:fPaffine}
  f_\bfP := \Bigl(
    \overbrace{b_2x_2,\ldots,b_kx_k}^{\text{$k-1$ terms}},
    \overbrace{b_{k+1}x_{k+1}+P_{k+1}(\bfx),\ldots,b_Nx_N+P_N(\bfx)}^{\text{$N-k$ terms}},
  b_1x_1+P_{N+1}(\bfx) \Bigr).
\end{equation}
We denote the extension of~$f_\bfP$ to a birational map of~$\PP^N$ by
\[
  \fbar_\bfP : \PP^N \longrightarrow \PP^N.
\]
\end{definition}

The set of H{\'e}non  maps sits inside the parameter space~$\Rat_d^N$ of
degree~$d$ rational maps $\PP^N\to\PP^N$. In studying dynamics, one
looks at the quotient of this space by the natural conjugation action
of~$\SL_{N+1}$ on the maps parameterized by~$\Rat_d^N$. Geometric
invariant theory (GIT) provides subsets of stable and semistable
points in~$\Rat_d^n$ for which the $\SL_{N+1}$-quotient has a nice
structure.  For details of this standard construction, see for
example~\cite{MR2884382}. It is known~\cite{MR2741188,MR2567424} that
all endomorphisms of~$\PP^N$ are in the stable locus,
but the full structure of the stable and semistable loci is
complicated. It thus seems to be of interest to determine whether H{\'e}non 
maps, which are such a good testing ground for many dynamical
problems, are in these loci. Our main theorem provides an answer to
this question.

\begin{theorem}
\label{theorem:unstst}
Let~$N\ge{k}\ge2$ and $d\ge2$, and let
$\fbar_\bfP:\PP^N\to\PP^N$ be a degree~$d$ H{\'e}non  map defined over an
algebraically closed field of characteristic~$0$ as described in
Definition~$\ref{definition:Henonx}$.
\begin{itemize}
  \setlength{\itemsep}{0pt}
  \item[\textup{(a)}] If $d\ge3$ or $k\ge3$, then~$\fbar_\bfP$ is in the
    $\SL_{N+1}$-unstable locus of~$\Rat_d^N$.
  \item[\textup{(b)}] If $d=k=2$, then~$\fbar_\bfP$ is not in the
    $\SL_{N+1}$-stable locus of~$\Rat_d^N$.
  \item[\textup{(c)}] If $d=k=N=2$, then~$\fbar_\bfP$ is in the
    $\SL_{N+1}$-semistable locus of~$\Rat_d^N$.
\end{itemize}
\end{theorem}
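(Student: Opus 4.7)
The central tool is the Hilbert--Mumford numerical criterion: for each 1-PS $\lambda(t)=\operatorname{diag}(t^{r_0},\ldots,t^{r_N})$ of the diagonal torus of $\SL_{N+1}$ with $\sum r_i=0$, the $\lambda$-weight of a monomial $X_0^{a_0}\cdots X_N^{a_N}$ appearing in the $i$th coordinate polynomial $F_i$ of $\fbar_\bfP$ equals $r_i-\sum_j r_j a_j$, and $\fbar_\bfP$ is unstable (resp.\ not stable) iff some such $\lambda$ makes every occurring weight strictly positive (resp.\ $\ge 0$, with $\lambda$ nontrivial). Since every 1-PS of $\SL_{N+1}$ is conjugate to a diagonal one, I would work throughout with diagonal $\lambda$, recording first the explicit homogenization $F_0=X_0^d$, $F_i=b_{i+1}X_{i+1}X_0^{d-1}$ for $1\le i<k$, $F_i=b_{i+1}X_{i+1}X_0^{d-1}+\widetilde P_{i+1}$ for $k\le i<N$, and $F_N=b_1X_1X_0^{d-1}+\widetilde P_{N+1}$, where $\widetilde P_j\in K[X_0,X_k,\ldots,X_{j-1}]$ is the homogenization of $P_j$ and involves none of $X_1,\ldots,X_{k-1}$. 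This rigidity is what leaves room for destabilizing 1-PSs.

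For part (a) the plan is to exhibit a single 1-PS making every coefficient weight positive. I would try
\[
  r_0=-s,\quad r_1=\cdots=r_{k-1}=A,\quad r_k=\cdots=r_N=B,\qquad s=(k-1)A+(N-k+1)B,
\]
with $A>0$ and $B<0$. A direct computation shows each shift monomial $X_{i+1}X_0^{d-1}$ has weight $(d-1)s$ except $X_1X_0^{d-1}\subset F_N$, whose weight is $B-A+(d-1)s$, while each monomial of $\widetilde P_{i+1}$ with $a_0$ factors of $X_0$ has weight $(s+B)a_0-(d-1)B$. Requiring every such weight to be strictly positive collapses, after substituting for $s$, to the single binding inequality
\[
  [(d-1)(k-1)-1]\,A \;>\; [1+(d-1)(N-k+1)]\,|B|,
\]
which admits a solution in $A,|B|>0$ precisely when $(d-1)(k-1)>1$, i.e.\ exactly when $d\ge 3$ or $k\ge 3$. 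Hilbert--Mumford then places $\fbar_\bfP$ in the unstable locus.

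For part (b) the same family degenerates at the boundary $(d-1)(k-1)=1$. Specializing to the limiting values $A=1,\ B=0,\ s=1$ gives the nontrivial 1-PS $\lambda(t)=\operatorname{diag}(t^{-1},t,1,\ldots,1)$. Every monomial of $\fbar_\bfP$ then has weight $\ge 0$, with equality at the shift $X_1X_0\subset F_N$ and at the $a_0=0$ monomials of $\widetilde P_{N+1}$. Hence $\mu(\fbar_\bfP,\lambda)=0$ and $\fbar_\bfP$ fails to be stable.

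For part (c), with $N=k=d=2$ the map is $f_\bfP=(b_2x_2,b_1x_1+P(x_2))$ with $\deg P=2$, so the four monomials $X_0^2\subset F_0$, $X_2X_0\subset F_1$, $X_1X_0\subset F_2$, and $X_2^2\subset F_2$ are forced to have nonzero coefficient, with respective $\lambda$-weights $-r_0,\ r_1-r_2-r_0,\ r_2-r_1-r_0,\ -r_2$. A diagonal 1-PS destabilizing $\fbar_\bfP$ would have all four of these strictly positive, forcing $r_0<0$, $r_2<0$, and $r_2-r_1-r_0>0$; combined with $r_1=-r_0-r_2$ the last yields $2r_2>0$, contradicting $r_2<0$. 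Hence no diagonal 1-PS destabilizes $\fbar_\bfP$, and Hilbert--Mumford gives semistability. The genuinely hard step in the whole argument is really only the choice of 1-PS in part (a); once it is in hand, parts (b) and (c) are short boundary and contradiction arguments built on the same four-monomial bookkeeping.
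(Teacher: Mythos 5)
Your treatment of parts (a) and (b) is correct and is essentially the paper's argument: the same three-block diagonal one-parameter subgroups (your $(A,B,-s)$ is the paper's $(r,-s,-t)$, with the constraint $s=(k-1)A+(N-k+1)B$ matching the paper's $r(k-1)-s(N-k+1)-t=0$), the same weight bookkeeping, and the same identification of $B-A+(d-1)s>0$ as the binding inequality, feasible exactly when $(d-1)(k-1)\ge 2$. The only cosmetic slip is that the shift monomial $X_kX_0^{d-1}$ also has an exceptional weight $A-B+(d-1)s$, but since $A>0>B$ this is harmless.

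Part (c), however, has a genuine gap. To conclude semistability from the Hilbert--Mumford criterion you must show that \emph{no} one-parameter subgroup $\lambda:\GG_m\to\SL_3$ destabilizes $\fbar_\bfP$. It is true that every $\lambda$ is conjugate to a diagonal one, but the conjugating element $g\in\SL_3$ also acts on the map: ruling out diagonal $\lambda$ against $\fbar_\bfP$ in its standard coordinates is equivalent to ruling out only those $\lambda$ whose weight decomposition is aligned with the standard coordinate axes, since $\mu(\fbar_\bfP,g\lambda g^{-1})=\mu(g^{-1}\circ\fbar_\bfP\circ g,\lambda)$. Your four-monomial contradiction therefore shows only that $\fbar_\bfP$ is not destabilized by torus subgroups diagonal in the given basis; a destabilizing $\lambda$ could a priori be diagonal only after a coordinate change, in which basis the conjugated map has a completely different (generically full) set of monomials. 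This asymmetry is exactly why exhibiting one good $\lambda$ suffices for instability in (a) but the dual argument does not suffice for semistability in (c). The paper explicitly concedes that it does not know how to prove (c) directly by the numerical criterion, and instead proves a classification (Theorem~\ref{theorem:unsquadaff}): every unstable degree-$2$ dominant rational self-map of $\PP^2$ either has $\deg(F^2)\le 2$ or is linearly fibered over $\PP^1$; it then checks that a quadratic H\'enon map satisfies $\deg(f^2)=4$ and, via an analysis of images of lines (Lemma~\ref{claim:Henonline}), cannot be linearly fibered. Some argument of this kind, quantifying over all $\SL_3$-conjugates, is needed to close your part (c).
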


\begin{remark}
We briefly discuss why one might be interested in the stability
properties of H{\'e}non  maps.  Let~$K$ be an algebraically closed field.
GIT says that the stable and semistable loci~$(\Rat_d^N)^\stab$
and~$(\Rat_d^N)^\semistab$ have $\SL_{N+1}$-quotients that are
irreducible algebraic varieties.  In particular, the stable
quotient~$\Moduli_d^N:=(\Rat_d^N)^\stab//\SL_{N+1}$ has the property
that two points~$f_1,f_2\in(\Rat_d^N)^\stab(K)$ have the same image
in~$\Moduli_d^N(K)$ if and only if there is some~$\phi\in\SL_{N+1}(K)$
such that~$f_2=\phi\circ{f_1}\circ\phi^{-1}$. For the semistable
quotient~$\overline\Moduli_d^N:=(\Rat_d^N)^\semistab//\SL_{N+1}$, two
points~$f_1,f_2\in(\Rat_d^N)^\semistab(K)$ have the same image
in~$\overline\Moduli_d^N(K)$ if and only if the Zariski closure of
their orbits~$\SL_{N+1}(K)f_1$ and~$\SL_{N+1}(K)f_2$ has a point in
common, which is somewhat less satisfactory, but is balanced by the
fact that~$\overline\Moduli_d^N$ is proper of~$K$. So for example, if
we are given a~$1$-parameter family of endomorphisms
\[
f : \AA^1\setminus0\longrightarrow \{ \text{degree $d$ morpshims $\PP^N\to\PP^N$} \},
\]
then the induced map to~$\overline\Moduli_d^N$ fills in with an
equivalence class of semistable maps at~$0$, i.e., there is a morphism
\[
  \AA^1(K)\longrightarrow\overline\Moduli_d^N(K) \quad
  \text{given for $t\ne0$ by}\quad t\longmapsto \langle f_t\rangle.
\]
It is thus of interest to understand the semistable maps
in~$\Rat_d^N$, since they are the maps that occur as the natural
limiting values in families of maps.
\par
For example, let $d\ge2$, and $t\in\AA^1(K)$, consider the family of maps
\begin{equation}
  \label{eqn:fbardtxyz}
  \fbar_{d,t}(x,y,z) = [tx^d+yz^{d-1},xz^{d-1}+y^d,z^d] : \PP^2\longrightarrow\PP^2.
\end{equation}
For $t\ne0$, the map~$\fbar_{d,t}$ is a morphism, so it is
in~$(\Rat_d^2)^\stab(K)$, and as~$t\to0$, the
points~$\langle{f_t}\rangle\in\overline\Moduli_d^2(K)$ have a limiting
value consisting of the~$\SL_3(K)$-orbits of various semistable maps
in~$(\Rat_d^2)^\semistab(K)$. Theorem~\ref{theorem:unstst} says that
if~$d=2$, then
\[
  \lim_{t\to0}\langle{f_t}\rangle = \langle f_0\rangle,
\]
but that if~$d\ge3$, then the H{\'e}non  map~$f_0$ is not a limiting value
of~$\langle{f_t}\rangle$. Roughly speaking, this says that for~$d=2$,
one can study the H{\'e}non  map~$f_0$ as a natural degeneration of the
family~\eqref{eqn:fbardtxyz} of morphisms~$f_t$, but that this is not
the case for~$d\ge3$.
\end{remark}

We conclude this introduction with a summary of the steps that go into
proving Theorem~\ref{theorem:unstst}.  We start in
Section~\ref{section:hmcrit} by setting notation and stating the
Hilbert-Mumford numerical criterion for
stability~\cite{mumford:geometricinvarianttheory}.  We use this
criterion in Section~\ref{section:instabdegge3} to show that H{\'e}non  maps
are unstable if $d\ge3$ or $k\ge3$, and that they are not stable if
$d=k=2$.  This leaves the problem of determining if H{\'e}non  maps are
semistable or unstable when $d=k=2$.  Since we do not know how to do this directly,
we follow a different path in the case $N=d=k=2$.  In
Section~\ref{section:classifyunstable} we show that all unstable maps
in $\Rat_2^2$ are either algebraically unstable or are linearly
fibered over~$\PP^1$, and we then show that this precludes their being
H{\'e}non  maps. Thus the proof of Theorem~\ref{theorem:unstst}(c) follows
the Shelock Holmes method: ``Once you eliminate the impossible,
whatever remains\dots must be the truth.''

%%%%%%%%%%%%%%%%%%%%%%%%%%%%%%%%
\section{The Hilbert--Mumford numerical criterion for stability}
\label{section:hmcrit}

In this section, we review and set notation for the Hilbert--Mumford
numerical criterion on projective spaces.  For a $1$-parameter
subgroup ($1$-PS) $L:\GG_m\to\SL_{N+1}\to\Aut(\PP^N)$, we let~$L$ act
by conjugation on a rational map
\[
f= \left[ \sum a_1(\bfi) \bfX^\bfi: \sum a_2(\bfi) \bfX^\bfi: \cdots : \sum a_{N+1}(\bfi) \bfX^\bfi  \right]  
\in\Rat_d^N
\]
via
$f^L:=L(\alpha)\circ{f}\circ{L(\alpha)^{-1}}$. (Here~$\bfX:=[x_1:\cdots:x_{N+1}]$,
and we sum over multi-indices $\bfi:=(i_1,\ldots,i_{N+1})$.)  After 
a change of coordinates, we may take~$L$ to be diagonal, i.e., the
matrix of~$L(\alpha)$ is diagonal with
entries~$\alpha^{r_1},\ldots,\alpha^{r_{N+1}}$ for some integers~$r_i$
that sum to~$0$. Then the action takes the form
\[
 f^L = \left[ \sum \alpha^{e_1(\bfi,L)} a_1(\bfi) \bfX^\bfi:
   \sum \alpha^{e_2(\bfi,L)}a_2(\bfi) \bfX^\bfi: \cdots :
   \sum \alpha^{e_{N+1}(\bfi,L)}a_{N+1}(\bfi) \bfX^\bfi \right].
\]
The Hilbert--Mumford numerical criterion for stability is then defined
in terms of the following numerical invariant:\footnote{This is the negative of
  the~$\mu$ that typically appears in the literature, but we find that
  this version is easier to work with.}
\[
\mu(f, L)  := \min_{\substack{(j,\bfi)\\ a_j(\bfi)\ne 0\\}}   e_j(\bfi,L).
\]

\begin{theorem}[Numerical criterion, \cite{mumford:geometricinvarianttheory}]
  Let $f\in \Rat_d^N$. Then\footnote{More formally, $\Rat_d^N$ is the
    complement of a hypersurface in a large projective space~$\PP^M$,
    and stability is always relative to the ample line
    bundle~$\Ocal_{\PP^M}(1)$, so we omit it from the notation.}
\begin{itemize}
  \setlength{\itemsep}{0pt}
\item[\textup{(a)}]\qquad
  $\mu(f,L) > 0$ for some $1$-\textup{PS} $L$
  \quad $\Longleftrightarrow$ \quad
  $f$ is unstable.
\item[\textup{(b)}]\qquad
  $\mu(f,L) \ge 0$ for some $1$-\textup{PS} $L$
  \quad $\Longleftrightarrow$ \quad
  $f$ is not stable.
\end{itemize}
\end{theorem}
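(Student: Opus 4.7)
The plan is to prove the standard Hilbert--Mumford criterion and translate the conclusion into the sign convention used here. Embed $\Rat_d^N$ as the complement of a hypersurface in the projective space $\PP^M$ whose coordinates are the coefficients of an $(N+1)$-tuple of degree~$d$ homogeneous polynomials on $\PP^N$. A point $f \in \Rat_d^N$ lifts to a coefficient vector $\tilde f$ in the affine cone $\AA^{M+1}\setminus\{0\}$, and by the GIT definitions relative to $\Ocal_{\PP^M}(1)$, $f$ is semistable iff $0 \notin \overline{\SL_{N+1}\cdot\tilde f}$, and $f$ is stable iff $\SL_{N+1}\cdot\tilde f$ is Zariski closed in $\AA^{M+1}$ with finite stabilizer of $\tilde f$. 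The task is to detect these orbit-closure properties using $1$-PSs.

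The key input, which I would cite from Mumford's book rather than reprove, is the Cartan--Iwahori--Mumford reduction: one has $0 \in \overline{\SL_{N+1}\cdot\tilde f}$ iff there exists a $1$-PS $L:\GG_m\to\SL_{N+1}$, after replacing $\tilde f$ by a suitable conjugate, with $\lim_{\alpha\to 0} L(\alpha)\cdot\tilde f = 0$, and more generally a non-trivial degeneration of $\tilde f$ along some $1$-PS exists whenever the orbit of $\tilde f$ fails to be closed. This step uses properness of the flag variety together with the Bruhat decomposition to reduce a question about the full group $\SL_{N+1}$ to its maximal torus, and it is the true technical core of the theorem.

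Once this reduction is granted, the remaining work is concrete. Diagonalize $L(\alpha)=\operatorname{diag}(\alpha^{r_1},\dots,\alpha^{r_{N+1}})$ with $\sum r_j=0$; a direct check shows that conjugation $f\mapsto L(\alpha)\,f\,L(\alpha)^{-1}$ multiplies the coefficient $a_j(\bfi)$ of $\bfX^\bfi$ in the $j$-th coordinate by $\alpha^{e_j(\bfi,L)}$, with $e_j(\bfi,L) = r_j - \sum_\ell i_\ell r_\ell$. Since $\alpha^e\to 0$ as $\alpha\to 0$ iff $e>0$, the limit $\lim_{\alpha\to 0}L(\alpha)\cdot\tilde f = 0$ in the cone iff every exponent attached to a non-vanishing coefficient is strictly positive, iff $\mu(f,L)>0$; and a non-zero limit exists in the cone iff every such exponent is non-negative, iff $\mu(f,L)\ge 0$. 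Combining with the reduction step, (a) $f$ is unstable iff $0$ lies in the orbit closure of $\tilde f$ iff $\mu(f,L)>0$ for some $L$, while (b) $f$ fails to be stable iff some non-trivial $1$-PS degenerates $\tilde f$ to a point of the cone iff $\mu(f,L)\ge 0$ for some $L$. The principal obstacle is the reduction to $1$-PSs of a maximal torus; the weight computation and the translation of signs are routine bookkeeping.
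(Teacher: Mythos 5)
The paper does not prove this statement at all: it is quoted verbatim from Mumford's \emph{Geometric Invariant Theory} (adjusted for the authors' sign convention, as their footnote explains), so there is no internal proof to compare against. Your proposal is a correct outline of the standard argument. You rightly isolate the genuinely hard content --- the Hilbert--Mumford/Iwahori reduction showing that $0\in\overline{\SL_{N+1}\cdot\tilde f}$, or more generally a non-closed orbit, is always witnessed by a single one-parameter subgroup --- and defer exactly that to Mumford, which is legitimate since that is precisely what the citation covers. The weight computation $e_j(\bfi,L)=r_j-\sum_\ell i_\ell r_\ell$ matches the conjugation action $L(\alpha)\circ f\circ L(\alpha)^{-1}$ used in the paper, and your translation of the sign convention (limit equals $0$ iff all relevant exponents are positive iff $\mu(f,L)>0$; limit exists iff $\mu(f,L)\ge0$) is consistent with the authors' remark that their $\mu$ is the negative of the usual one. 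Two small points worth making explicit: in part (b) the one-parameter subgroup must be non-trivial (the trivial $L$ always gives $\mu=0$, so the statement as printed is slightly imprecise; you implicitly fix this by saying ``non-trivial''), and in the case where $\lim_{\alpha\to0}L(\alpha)\cdot\tilde f$ exists and lies in the orbit one should note that this forces the stabilizer to be infinite, so stability still fails --- a standard fact, but it is the step that makes your ``degenerates to a point of the cone'' dichotomy airtight.
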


The Hilbert--Mumford criterion suggests if $f$ has a small number of
monomials, then $f$ has a higher chance of being unstable. Thus H{\'e}non 
maps, especially those of high degree or dimension, are likely to be
unstable.  Theorem~\ref{theorem:unstst} confirms this intuition.

%%%%%%%%%%%%%%%%%%%%%%%%%%%%%%%%
\section{Instability of H{\'e}non  maps}
\label{section:instabdegge3}

In this section we use the Hilbert--Mumford criterion to prove the
first two parts of Theorem~\ref{theorem:unstst}, which says that no
H{\'e}non  map is stable and that most H{\'e}non  maps are unstable.

\begin{proof}[Proof of Theorem \textup{\ref{theorem:unstst}(a,b)}] 
We homogenize equation~\eqref{eqn:fPaffine}, using the new variable
by~$x_{N+1}$, to obtain a birational map
\[
\fbar_\bfP : \PP^N \longrightarrow \PP^N.
\]
Writing~$\Pbar_i(\bfx)$ for the homogenization of~$P_i(\bfx)$, the
birational map~$\fbar_\bfP:\PP^N\to\PP^N$ has the form
\begin{multline*}
\fbar_\bfP = \smash[b]{\Bigl[}
  \overbrace{b_2x_2x_{N+1}^{d-1} : \cdots : b_kx_kx_{N+1}^{d-1}}^{\text{coordinates $1,2,\ldots,k-1$}} :  \\
  \overbrace{b_{k+1}x_{k+1}x_{N+1}^{d-1}+\Pbar_{k+1}(\bfx) : \cdots : b_Nx_Nx_{N+1}^{d-1}+\Pbar_N(\bfx)}^{\text{coordinates $k,k+1,\ldots,N-1$}} : \\
  \underbrace{b_1x_1x_{N+1}^{d-1}+P_{N+1}(\bfx)}_{\text{coordinate $N$}} : 
  \underbrace{x_{N+1}^d}_{\text{coordinate $N+1$}} \Bigr].
\end{multline*}

We write~$I_n$ to denote the $n$-by-$n$ identity matrix.
For integers~$r,s,t$, not all~$0$ and satisfying
\begin{equation}
  \label{eqn:N1rst0}
  r(k-1)-s(N-k+1)-t = 0,\quad r,s,t \ge 0,
\end{equation}
we define a 1-PS $L:\GG_m\to\SL_{N+1}$ by the formula
\[
  L = L_{r,s,t} :=
  \begin{pmatrix}
    \alpha^r I_{k-1} & 0 & 0 \\
    0 &  \alpha^{-s} I_{N-k+1}   & 0 \\
    0 & 0 & \alpha^{-t} \\
  \end{pmatrix}.
\]
Explicitly,  the action of~$L_{r,s,t}$ on the monomials~$x_1,\ldots,x_{N+1}$ is given by
\[
  L_{r,s,t}(x_i)
  = \begin{cases}
    \alpha^{r}x_i &\text{for $1\le i\le k-1$,} \\
    \alpha^{-s}x_i &\text{for $k\le i\le N$,} \\
    \alpha^{-t}x_{N+1} &\text{for $k=N+1$.} \\
  \end{cases}
\]
We compute the powers of~$\alpha$ that appear in front of the
monomials in
\[
  \fbar_\bfP^L:=L_{r,s,t} \circ \fbar_\bfP \circ L_{r,s,t}^{-1}.
\]
The results ae given in Table~\ref{table:alphaexpfLx}.

In Table~\ref{table:alphaexpfLx}, only line~(IV), which deals with the
monomials appearing in the
polynomials~$\Pbar_{k+1},\ldots,\Pbar_{N+1}$, needs some further
explanation.  For each~$k\le{i}\le{N}$, the polynomial~$P_{i+1}$ is a
polynomial in the variables~$x_k,\ldots,x_i$ by definition, so in the
homogenized polynomials in~$\Pbar_{i+1}$, the monomials are of degree~$d$
in the variables~$x_k,\ldots,x_i,x_{N+1}$, i.e., they are monomials of
the form
\begin{equation}
  \label{eqn:monomialxkek}
  x_k^{e_k}\cdots x_i^{e_i} x_{N+1}^{d-e_k-\cdots-e_i}.
\end{equation}
The 1-PS~$L_{r,s,t}$ mutliplies each of~$x_k,\ldots,x_i$ by~$\alpha^{-s}$
and multiplies~$x_{N+1}$ by~$\alpha^{-t}$.  The monmomials of the
form~\eqref{eqn:monomialxkek} appear in the~$i$th coordinate
for some~$k\le{i}\le{N}$, so conjugation of~$\fbar_\bfP$ by~$L_{r,s,t}$
multiplies the monomial~\eqref{eqn:monomialxkek} by
\[
  \alpha^{(e_k+\cdots+e_i-1)s + (d-e_k-\cdots-e_i)t}.
\]
Letting~$m$ denote the quantity~$e_k+\cdots+e_i$, we see that every
monomial of every~$\Pbar_i$ in~$\fbar_\bfP$ is multiplied by
$\alpha^{(m-1)s+(d-m)t}$ for some $0\le{m}\le{d}$.

\begin{table}[ht]
\[
\begin{array}{|c|c|c|c|c|c|} \hline
   & \text{coordinate of $\fbar_\bfP$} & \text{monomial} & \text{exponent of $\alpha$} \\ \hline\hline
 \text{I} &   1 \le i \le k-2 & x_{i+1}x_{N+1}^{d-1} & (d-1)t  \\ \hline
 \text{II} &   k-1 & x_k x_{N+1}^{d-1}   & r+s+(d-1)t \\ \hline
 \text{III} &   k \le i \le N-1 & x_{i+1}x_{N+1}^{d-1} & (d-1)t \\ \hline
 \text{IV} &  k \le i\le N & \text{monomial in $\Pbar_{i+1}^{{\vphantom)}}$} & (m-1)s+(d-m)t~\text{for various $0\le m\le d$} \\ \hline
 \text{V} &   N & x_1x_{N+1}^{d-1} & -r -s + (d-1)t \\ \hline
 \text{VI} &   N+1 & x_{N+1}^d & (d-1)t \\ \hline
\end{array}
\]
\caption{Exponents of $\alpha$ used in computing $\mu(\fbar_\bfP,L_{r,s,t})$}
\label{table:alphaexpfLx}
\end{table}

\Case{1}{k\ge3~\textbf{or}~d\ge3}
By definition of H{\'e}non  maps, we always have~$N\ge{k}\ge2$ and~$d\ge2$. 
We set
\begin{equation}
  \label{eqn:sk1tk1N1r2N2k}
  s=k-1,\quad t=(k-1)(N+1),\quad r=2N+2-k,
\end{equation}
so~$(r,s,t)$ satisfy~\eqref{eqn:N1rst0} and are all strictly positive.
It follows immediately that the exponent of~$\alpha$ in
lines~I,~II,~III, and~VI of Table~\ref{table:alphaexpfLx} are strictly
positive.

For line~IV, we observe that if $1\le{m}\le{d}$, then $(m-1)s+(d-m)t$
is a sum of two non-negative terms, at least one of which is positive,
so the sum is positive. And for~$m=0$, using~\eqref{eqn:sk1tk1N1r2N2k}
shows that the exponent of~$\alpha$ is
\[
  dt-s = (k-1) \bigl( d(N+1)-1 \bigr) > 0.
\]

For line~V, we use~\eqref{eqn:sk1tk1N1r2N2k} and a little
algebra to compute
\[
  -r -s + (d-1)t= \bigl((d-1)(k-1)-2\bigr)(N+1) + 1 > 0.
\]
Note that this is where we need to assume that~$k\ge3$ or~$d\ge3$,
since if~$k=d=2$, then $(d-1)(k-1)-2=-1$, so $-r-s+(d-1)t=-N$.

This completes the proof that
\[
\text{$k\ge3$ or $d\ge3$}\quad\Longrightarrow\quad
\mu(\fbar_{\bfP},L_{r,s,t}) > 0,
\]
and hence by the numerical criterion that~$\fbar_{\bfP}$ is in the
$\SL_{N+1}$-unstable locus.

\Case{2}{k=2~\textbf{and}~d=2}
In this case we take
\[
r=1,\quad s=0,\quad t=1,
\]
which satisfy~\eqref{eqn:N1rst0}. The exponents of~$\alpha$ in the six
rows of Table~\ref{table:alphaexpfLx} are 
\[
  1,\; 0,\; 1,\; 2-m~\text{for $0\le m\le 2$},\; 0,\; 1.
\]
Hence
\[
  \mu(\fbar_{\bfP},L_{1,0,1}) = \min\{0,1,2\} = 0,
\]
which by the numerical criterion shows that~$\fbar_\bfP$ is not stable.
\end{proof}

%%%%%%%%%%%%%%%%%%%%%%%%%%%%%%%%%%%%%%%%%%%%%%%%%%%%%%%%%%%%%%%%%%%%%%
\section{Unstable quadratic affine morphisms on $\AA^2$}
\label{section:classifyunstable}
%%%%%%%%%%%%%%%%%%%%%%%%%%%%%%%%%%%%%%%%%%%%%%%%%%%%%%%%%%%%%%%%%%%%%%

In Section~\ref{section:instabdegge3} we showed that all H{\'e}non  maps are
not stable, but we were not able to show that certain quadratic maps
were actually unstable. There is a good reason for this, because at
least for~$N=2$, they are semistable, a fact that we prove in this
section.  However, since we do not see how to show directly that these
maps are semistable, we instead classify unstable quadratic maps
on~$\PP^2$, and then we show that this classification precludes an
unstable map from being a H{\'e}non  map.

\begin{theorem} 
\label{theorem:unsquadaff}
Let~$F:\PP^2\to\PP^2$ be a rational map having the following
properties\textup:
\begin{itemize}
  \setlength{\itemsep}{0pt}
\item
  $F:\PP^2\to\PP^2$ is a dominant rational map of degree $2$, i.e.,
  $F^*\Ocal_{\PP^2}(1)=\Ocal_{\PP^2}(2)$.
\item
  $F$ is  in the $\SL_3$-unstable locus of~$\Rat_2^2$.
\end{itemize}
Then one of the following is true\textup:
\begin{itemize}
  \setlength{\itemsep}{0pt}
  \item[\textup(a)]
The map~$F$ factors through a non-constant linear projection to~$\PP^1$,
i.e., there is a rational map~$\pi:\PP^2\to\PP^1$ satisfying
$\pi^*\Ocal_{\PP^1}(1)=\Ocal_{\PP^2}(1)$ and a rational
map~$G:\PP^1\to\PP^1$ making the following diagram
commute:
\begin{equation}
  \label{eqn:fpig}
    \begin{CD}
      \PP^2 @>F>> \PP^2 \\ @V\pi VV @VV\pi V \\ \PP^1 @>G>> \PP^1 \\
    \end{CD}
\end{equation}
\item[\textup(b)]
  The second iterate of~$F$ satisfies~$\deg(F^2)\le2$, so in
  particular, the map~$F$ is not algebraically stable in the sense of
  dynamical systems.\footnote{In general, if~$F:\PP^N\to\PP^N$ is a
    dominant rational map, then~$F$ is said to be \emph{algebraically
      stable}, in the sense of dynamics, if $\deg(F^n)=(\deg{F})^n$ for
    all~$n\ge1$.}
\end{itemize}
\end{theorem}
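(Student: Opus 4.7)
The plan is to apply the Hilbert--Mumford numerical criterion of Section~\ref{section:hmcrit} to obtain a destabilizing 1-PS, diagonalize it, and then split on the sign of its middle weight. Instability produces a 1-PS $L:\GG_m\to\SL_3$ with $\mu(F,L)>0$, and after a linear change of coordinates we may take $L(\alpha)=\mathrm{diag}(\alpha^{r_1},\alpha^{r_2},\alpha^{r_3})$ with $r_1\ge r_2\ge r_3$, $r_1+r_2+r_3=0$, not all zero (so in particular $r_1>0>r_3$). Writing $F=[F_1:F_2:F_3]$, the condition $\mu(F,L)>0$ translates into the statement that every monomial $x_1^{i_1}x_2^{i_2}x_3^{i_3}$ appearing in $F_j$ with a nonzero coefficient satisfies $r_j>r_1i_1+r_2i_2+r_3i_3$.

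In the case $r_2\le 0$: a direct check on the three quadratic monomials containing $x_1$ shows each of their weights ($2r_1$, $r_1+r_2$, and $r_1+r_3=-r_2$) exceeds both $r_2$ and $r_3$, so none of these monomials can appear in $F_2$ or $F_3$. Thus $F_2,F_3\in K[x_2,x_3]$, and $F$ factors through the linear projection $\pi\colon[x_1:x_2:x_3]\mapsto[x_2:x_3]$ via $G([u:v]):=[F_2(u,v):F_3(u,v)]$, producing the commutative diagram~\eqref{eqn:fpig} and conclusion~(a). Dominance of $F$ prevents $F_2$ and $F_3$ from being identically proportional, so $G$ is a legitimate rational map.

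In the case $r_2>0$: the weight inequalities force $F_3=gx_3^2$ and $F_2=x_3\ell_2$ for some linear form $\ell_2$, and the only quadratic monomial not divisible by $x_3$ that may appear in $F_1$ is $x_2^2$ (and that only if $r_1>2r_2$). The hypothesis $F^*\Ocal_{\PP^2}(1)=\Ocal_{\PP^2}(2)$ forbids a common factor among $F_1,F_2,F_3$, so the coefficient $a$ of $x_2^2$ in $F_1$ must be nonzero, for otherwise $x_3\mid\gcd(F_1,F_2,F_3)$. Writing $F=[ax_2^2+x_3\ell_1:x_3\ell_2:gx_3^2]$ with linear forms $\ell_1,\ell_2$ and $ag\ne 0$, substitution into $F\circ F$ gives
\[
F_1\circ F = a(x_3\ell_2)^2 + gx_3^2\,\ell_1(F),\qquad F_2\circ F = gx_3^2\,\ell_2(F),\qquad F_3\circ F = g^3x_3^4,
\]
all manifestly divisible by $x_3^2$. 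Cancelling this common factor represents $F^2$ by polynomials of degree at most~$2$, so $\deg(F^2)\le 2$ and conclusion~(b) holds.

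The main obstacle is the bookkeeping in the case $r_2>0$: the degree-two hypothesis must be invoked carefully to rule out a common factor of $x_3$ among $F_1,F_2,F_3$ (thereby pinning down the $x_2^2$-coefficient of $F_1$ as nonzero), after which one verifies by direct expansion that $F\circ F$ really does produce $x_3^2$ as a common factor in all three components. The case $r_2\le 0$ is comparatively direct once the monomial supports of $F_2$ and $F_3$ are tabulated.
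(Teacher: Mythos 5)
Your proposal is correct and follows essentially the same route as the paper: diagonalize the destabilizing one-parameter subgroup, split on the sign of the middle weight, obtain the linear fibering over $\PP^1$ when that weight is non-positive, and otherwise read off that $F=[ax_2^2+x_3\ell_1:x_3\ell_2:gx_3^2]$ so that $x_3^2$ divides every coordinate of $F\circ F$. Your explicit expansion of $F\circ F$ in the second case merely spells out the ``direct calculation'' the paper leaves to the reader.
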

\begin{proof}
The assumption that~$F$ is unstable means that we can find a 1-PS
$L:\GG_m\to\SL_3$ satisfying~$\mu(F,L)>0$. We choose coordinates to
diagonalize~$L$ and so that the exponents of~$\alpha$ are
non-increasing.  This gives an~$L$ of the form
\[ 
  L = L_{r,s}(\alpha) 
  := \
  \begin{pmatrix}
    \alpha^{r} & 0 & 0\\
    0 & \alpha^{s-r} & 0\\
    0 & 0 & \alpha^{-s} \\
  \end{pmatrix}
  \quad\text{with $r\ge s-r \ge -s$ and $r,s>0$.}
\]

Table~\ref{table:expofalphax} lists the exponents of~$\alpha$ for the
monomials of $F^L:=L\circ{F}\circ{L}^{-1}$. Then~$\mu(F,L_{r,s})$ is
the smallest entry in the table whose monomial appears in~$F$.
Alternatively, our assumption that~$\mu(F,L_{r,s})>0$ means that every
non-positive entry in Table~\ref{table:expofalphax} forces the
corresponding monomial to not appear in~$F$. 

\begin{table}[ht]
\[
\begin{array}{|c||c|c|c|c|c|c|}
\hline
& x^2 & y^2 & z^2 & xy & yz & xz \\
\hline\hline
\text{$x$-coordinate of $F^L$}    &  -r & 3r-2s & r+2s & r-s & 2r & s \\ \hline       %% -r 3r-2s r+2s r-s 2r s
\text{$y$-coordinate of $F^L$}    &  -3r+s & r-s & -r+3s & -r & s & -2r+2s \\ \hline  %% -3r+s r-s -r+3s -r s -2r+2s
\text{$z$-coordinate of $F^L$}    &  -2r-s & 2r-3s & s & -2s & r-s & -r \\ \hline     %% -2r-s 2r-3s s -2s r-s -r
\end{array}
\]
\caption{The exponent of $\alpha$ on each monomial of $F^L:=L \circ F \circ L^{-1}$}
\label{table:expofalphax}
\end{table}

%%%%%%%%%%%%%%%%%%%%
\Case{1}{r\ge s}
%%%%%%%%%%%%%%%%%%%%
In this case we look at the~$x^2$,~$xy$ and~$xz$ columns and the~$y$
and~$z$-coordinate rows in Table~\ref{table:expofalphax}. These six
entries are
\[
  \begin{array}{|c||c|c|c|c|c|c|} \hline
    & x^2 & xy & xz \\ \hline\hline
    \text{$y$-coordinate of $F^L$}   & -3r+s  &  -r  & -2r+2s \\ \hline
    \text{$z$-coordinate of $F^L$}   & -2r-s &  -2s & -r    \\ \hline
  \end{array}
\]
The fact that~$r,s>0$ immediately implies that four of the entries
are negative. Further, the entry~$-2r+2s$ is
non-positive because we are in the case that~$r\ge{s}$, and the entry~$-3r+s$
is negative because we normalized~$L$ so that~$r\ge{s-r}$.

It follows that the monomials~$x^2$,~$xy$, and~$xz$ do not appear in
the~$y$ and~$z$-coordinates of~$F$, so~$F$ has the form
\[
  F(x:y:z) = \bigl[ F_1(x:y:z): F_2(y:z): F_3(y:z) \bigr].
\]
Thus~$F$ factors over~$\PP^1$ as in~\eqref{eqn:fpig}
with
\[
\pi(x:y:z)=[y:z]\quad\text{and}\quad G(y:z)=\bigl[F_2(y:z):F_3(y:z)\bigr].
\]

%%%%%%%%%%%%%%%%%%%%
\Case{2}{r < s}  
%%%%%%%%%%%%%%%%%%%%
We rewrite Table~\ref{table:expofalphax}, putting an~$\xmark$ in the
boxes whose entries are non-positive. The results are compiled in
Table~\ref{table:expofalphaxrlts}.
\begin{table}[ht]
\[
\begin{array}{|c||c|c|c|c|c|c|}
\hline
& x^2 & y^2 & z^2 & xy & yz & xz \\
\hline\hline
\text{$x$-coordinate of $F^L$}    &  \xmark & 3r-2s & r+2s & \xmark & 2r & s \\ \hline       
\text{$y$-coordinate of $F^L$}    &  \xmark & \xmark & -r+3s & \xmark & s & -2r+2s \\ \hline  
\text{$z$-coordinate of $F^L$}    &  \xmark & \xmark & s & \xmark & \xmark & \xmark \\ \hline     
\end{array}
\]
\caption{The exponent of $\alpha$ on each monomial of $F^L$ assuming $2r\ge s>r>0$}
\label{table:expofalphaxrlts}
\end{table}

Examining Table~\ref{table:expofalphaxrlts}, we see that~$F$ has the form
\[
F(x:y:z) = \bigl[ \underbrace{\text{no $x^2$ or $xy$ term}}_{\text{$x$-coordinate}} :
  \underbrace{z\cdot(\text{linear term})}_{\text{$y$-coordinate}} :
  \underbrace{az^2}_{\text{$z$-coordinate}} \bigr].
\]
Hence~$[1,0,0]$ is in the indeterminacy locus of~$F$, and~$F$ sends
the entire line~$[u,v,0]$ to the indeterminacy point~$[1,0,0]$. It
follows that we have a strict
inequality~$\deg(F^2)<(\deg{F})^2$. Indeed, since the first coordinate
of~$F$ lacks both an~$x^2$ and an~$xy$ term, we see by a direct
calculation that~$\deg(F^2)\le2$.
\end{proof}

We now have the tools needed to complete the proof of
Theorem~\ref{theorem:unstst}. 

\begin{corollary}[Theorem \textup{\ref{theorem:unstst}(c)}]
\label{corollary:Henonsemistable}
Let $\fbar_P:\PP^2\to\PP^2$ be a H{\'e}non  map of degree~$2$, i.e.,
there are scalars~$a,b\in{K^*}$ and a polynomial~$P(y)\in{K[x]}$
of degree~$2$ such that~$\fbar_P$ is the 
extension to~$\PP^2$ of the affine automorphism
\begin{equation}
  \label{eqn:affinehenon}
  f_P:\AA^2\longrightarrow\AA^2,\quad
  f_P(x,y) = \bigl(ay,bx+P(y)\bigr).
\end{equation}
Then~$\fbar_P$ is $\SL_3$-semistable.
\end{corollary}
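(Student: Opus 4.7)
The plan is to apply Theorem~\ref{theorem:unsquadaff}. Since Theorem~\ref{theorem:unstst}(b) already shows $\fbar_P$ is not stable, semistability will follow if $\fbar_P$ is not unstable, and for this it suffices to rule out conclusions~(a) and~(b) of Theorem~\ref{theorem:unsquadaff}. Writing $\Pbar(y,z)=\alpha_2 y^2+\alpha_1 yz+\alpha_0 z^2$ with $\alpha_2\ne 0$ for the homogenization of $P$, we have $\fbar_P=[ayz:bxz+\Pbar(y,z):z^2]$; the three coordinates share no common factor (the second at $z=0$ is $\alpha_2 y^2\ne 0$, while the third is $z^2$), so $\fbar_P$ is dominant of degree~$2$ and Theorem~\ref{theorem:unsquadaff} applies.

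For~(b), I would compute $\fbar_P^2=\fbar_P\circ\fbar_P$ directly. The third coordinate of $\fbar_P^2$ is $z^4$, and its second coordinate restricted to $z=0$ reduces to $\alpha_2\Pbar(y,0)^2=\alpha_2^3 y^4\ne 0$. Hence no power of~$z$ divides all three coordinates, so $\deg\fbar_P^2=4>2$, ruling out (b).

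The main obstacle is ruling out (a). My plan is to assume for contradiction that $\pi\circ\fbar_P=G\circ\pi$ for some linear projection $\pi:\PP^2\to\PP^1$ with center $c=[c_1:c_2:c_3]$, and to derive a contradiction by testing specific lines. Comparing pullbacks of $\Ocal_{\PP^1}(1)$ forces $\deg G=2$, so for each fiber $L=\pi^{-1}(q)$ of $\pi$, the preimage $\fbar_P^{-1}(L)$ must be a union of two lines through~$c$ (possibly coincident). First, for $L=\{y=0\}$ the preimage is the conic $\{bxz+\Pbar(y,z)=0\}$; a short check shows this is irreducible, because any factorization into linear forms, after using vanishing of the $x^2$ and $xy$ coefficients, would force either $\alpha_2=0$ or $b=0$. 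Hence $\{y=0\}$ is not in the pencil through~$c$, which gives $c_2\ne 0$. Second, for $L=\{x=0\}$ the preimage is $\{y=0\}\cup\{z=0\}$, two lines meeting only at $[1:0:0]$; so $\{x=0\}$ being in the pencil would force $c=[1:0:0]$, contradicting $c_2\ne 0$, and hence $c_1\ne 0$ as well.

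Finally, with $c_1,c_2\ne 0$, the pencil through~$c$ consists of lines $\{Ax+By+Cz=0\}$ subject to $Ac_1+Bc_2+Cc_3=0$; solving gives $B=-(Ac_1+Cc_3)/c_2$, which is nonzero for generic $(A,C)$. For such a line, the preimage under $\fbar_P$ is cut out by $B\alpha_2 y^2+(Aa+B\alpha_1)yz+Bb\cdot xz+(B\alpha_0+C)z^2$, which is again irreducible by the same argument (applied now with $B\alpha_2,Bb\ne 0$), contradicting the pencil-preserving condition. This rules out (a) and completes the proof.
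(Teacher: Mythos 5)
Your proof is correct, and for the crucial step --- ruling out alternative (a) of Theorem~\ref{theorem:unsquadaff} --- it takes a genuinely different route from the paper. The paper studies \emph{forward} images of lines: it proves a lemma that $\fbar_P$ sends $\{z=0\}$ to a point, sends each line $\{y=Bz\}$ to a line $\{x=Az\}$, and sends every other line to an irreducible conic; it then derives a contradiction because all but finitely many fibers $\ell_t$ would have to be of the form $\{y=B_tz\}$ while their images $\ell_{\gbar(t)}$ would simultaneously have to be of the forms $\{x=A_tz\}$ and $\{y=B_{\gbar(t)}z\}$. You instead study \emph{preimages} of lines: you locate the center $c$ of the putative projection by testing $\{y=0\}$ and $\{x=0\}$, and then observe that the preimage of a generic member of the pencil through $c$ is an irreducible conic, which cannot be a union of fibers. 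Since $\fbar_P$ is birational and its inverse is again of H\'enon type, the two arguments are in some sense dual; yours has the advantage of not needing the explicit classification of images of all lines, while the paper's avoids having to track the center of projection. Your computations (irreducibility of $bxz+\Pbar(y,z)$ and of $B\alpha_2y^2+(Aa+B\alpha_1)yz+Bb\,xz+(B\alpha_0+C)z^2$, and $\deg\fbar_P^2=4$) all check out.

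One small imprecision worth patching: the statement that $\fbar_P^{-1}(L)$ ``must be a union of two lines through $c$'' is not quite right when the degree-$2$ curve $\{\ell_L\circ\fbar_P=0\}$ contains a component \emph{contracted} by $\fbar_P$, namely $\{z=0\}$; a contracted component need not pass through $c$, since the semiconjugacy only forces the non-contracted components to lie in fibers of $\pi$. This matters exactly in your $\{x=0\}$ step, where the preimage is $\{y=0\}\cup\{z=0\}$: you cannot conclude $c=[1:0:0]$ from both components, but you can conclude that the non-contracted component $\{y=0\}$ (which $\fbar_P$ maps onto $\{x=0\}$) is a fiber, forcing $c_2=0$ and contradicting the previous step just as well. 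Likewise, the assertion $\deg G=2$ silently assumes $L_1\circ\fbar_P$ and $L_2\circ\fbar_P$ share no common factor (which fails only if $c=[0:1:0]$), but this is not load-bearing: your contradictions come from irreducibility of the preimage conics, not from the exact value of $\deg G$. With these two clarifications the argument is complete.
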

\begin{proof}
To ease notation in the proof of
Corollary~\ref{corollary:Henonsemistable}, we write~$f$ and~$\fbar$
for~$f_P$ and~$\fbar_P$.  We assume that~$\fbar$ is $\SL_3$-unstable
and derive a contradiction.  This assumption and
Theorem~\ref{theorem:unsquadaff} imply that either~$\deg(\fbar^2)\le2$
or~$\fbar$ is linearly fibered over~$\PP^1$.  A direct calculation
with~\eqref{eqn:affinehenon} yields~$\deg(f^2)=4$, so there is a
non-constant linear projection~$\pi:\PP^2\to\PP^1$ and a rational
map~$\gbar:\PP^1\to\PP^1$ satisfying~$\pi\circ{\fbar}=\gbar\circ\pi$,
as illustrated by the commutative diagram~\eqref{eqn:fpig}
with~$F=\fbar$ and~$G=\gbar$.

For each~$t\in\PP^1(K)$, we let
\[
  \ell_t := \overline{\pi^{-1}(t)} \subset \PP^2
\]
be the line lying over~$t$, i.e., the Zariski closure of the inverse
image of~$t$. Then the
semi-conjugation~$\pi\circ{\fbar}=\gbar\circ\pi$ implies
that\footnote{We use the usual convention that if~$F:X\to{Y}$ is a
  rational map between smooth projective varieties with indeterminacy
  locus~$I_f$, and if~$Z\subset{X}$ is a subvariety of
  codimension~$1$, then its image~$F(Z)$ is defined to
  be~$\overline{F(Z\smallsetminus{I_f})}$.}
\begin{equation}
  \label{eqn:f1}
  \fbar(\ell_t) \subseteq \ell_{\gbar(t)}
  \quad\text{for all $t\in\PP^1(K)$.}
\end{equation}
This suggests that we study the effect of~$\fbar$ on lines, as in the
next result.

\begin{lemma}
\label{claim:Henonline}
Let~$\ell\subset\PP^2$ be a line. Then its image under the H{\'e}non 
map~$\fbar$ is as follows:
\[ 
\fbar(\ell) = 
\begin{cases}
  \bigl\{[0:1:0]\bigr\} &\text{if $\ell=\{z=0\}$,} \\
  \{x=Az\}~\text{for some $A\in K$}&\text{if $\ell=\{y=Bz\}$ for some $B\in K$,} \\
  \text{an irreducible conic} &\text{otherwise.}\\    
\end{cases}
\]
\end{lemma}
\begin{proof}
It is clear that~$\fbar$ sends~$\{z=0\}$ to~$[0:1:0]$. For all other
lines, we work with the affine polynomial map~$f$ given
by~\eqref{eqn:affinehenon}.  The image of the horizontal
line~$\ell=\{y=B\}$ is
\[
  f\bigl( \{y=B\} \bigr) = \bigl\{ \bigl(aB,bt+P(B)\bigr) : t\in K \bigr\} = \{x=aB\}.
\]
Finally, every non-horizontal line has the form~$\{x=\lambda{y}+\mu\}$
for some~$\lambda,\mu\in{K}$. The image of this line is
\[
f\bigl( \{x=\lambda y+\mu\} \bigr) = \bigl\{ \bigl(at,b(\lambda t+\mu)+P(t)\bigr) : t\in K \bigr\} =
\bigl\{ y = b \lambda a^{-1} x + b \mu + P(a^{-1} x) \bigr\},
\]
which is an irreducible conic, since~$\deg P=2$ by assumption.  This
concludes the proof of Lemma~\ref{claim:Henonline}.
\end{proof}

The fact that~$\pi$ is a linear projection implies that the
lines~$\ell_t$ are distinct for distinct~$t$. In particular, there is
at most one~$t\in\PP^1(K)$ with~$\ell_{t}=\{z=0\}$. If there is such
a~$t$, we denote it by~$t_0$, and we define a set of points
\[
T = \{t_0\} \cup \bigl\{ t\in\PP^1(K) : \gbar(t)=t_0 \bigr\};
\]
and if~$\ell_t\ne\{z=0\}$ for all~$t$, then we set~$T=\emptyset$.  We
note that~$T$ is a finite set, since~$\gbar:\PP^1\to\PP^1$ is
non-constant, and that our definition of~$T$ ensures that for
all~$t\in\PP^1(K)\smallsetminus{T}$, neither of the lines~$\ell_t$
and~$\ell_{\gbar(t)}$ is equal to the line~$\{z=0\}$.

Let~$t\PP^1(K)\smallsetminus\{t_0\}$.  Then~$\ell_t\ne\{z=0\}$,
but~\eqref{eqn:f1} says that~$\fbar(\ell_t)$ is contained in the
line~$\ell_{\gbar(t)}$, so Lemma~\ref{claim:Henonline} implies
that~$\ell_t$ is a line of the form $\{y=Bz\}$ and that its
image~$\fbar(\ell_t)$ is a line of the form~$x=Az$. This proves:
\begin{equation}
  \label{eqn:f2}
\parbox[t]{.9\hsize}{\noindent For all $t\in\PP^1(K)\smallsetminus\{t_0\}$ there
  exist~$A_t,B_t\in{K}$ such that
  \[ \ell_t = \{y=B_tz\} \quad\text{and}\quad \fbar(\ell_t) = \ell_{\gbar(t)} = \{x=A_tz\}.\]
}
\end{equation}

Nest suppose that~$t\in\PP^1(K)\setminus{T}$. Then~$\gbar(t)\ne{t_0}$,
so we
may apply the first formula in~\eqref{eqn:f2} with~$t$ replaced by~$\gbar(t)$ to conclude
that
\begin{equation}
  \label{eqn:f3}
  \ell_{\gbar(t)} = \{y=B_{\gbar(t)} z\} \quad\text{for all $t\notin T$.}
\end{equation}
Combining~\eqref{eqn:f2}, and~\eqref{eqn:f3}, we find
that for all~$t\in\PP^1(K)\smallsetminus{T}$, we have
\[
  \{x=A_tz\} = \fbar(\ell_t) = \ell_{\gbar(t)} = \{y=B_{\gbar(t)} z\}.
\]
But a line of the form~$x=Az$ cannot equal a line of the form~$y=Bz$.
This contradication concludes the proof that~$\fbar$ is not unstable.
\end{proof}

\begin{remark}
The proof of Corollary~\ref{corollary:Henonsemistable}, which appears
to be somewhat ad hoc, comes down to showing that a H{\'e}non
map~$\fbar_P:\PP^2\to\PP^2$ is not linearly fibered over~$\PP^1$.  One
might be tempted to exploit the fact that~$\fbar_P$ is a birational map
with critical locus $\Crit(\fbar_P)=3\{z=0\}$, and that~$\fbar_P$ is the
extension of~$\PP^2$ of an affine automorphism $f_P:\AA^2\to\AA^2$.  So
it is instructive to keep in mind the affine automorphism
\[
  \phi:\AA^2\longrightarrow\AA^2,\quad
  \phi(x,y) = \bigl(x,y+P(x)\bigr),
\]
and its extension~$\bar\phi:\PP^2\to\PP^2$. The map~$\bar\phi$ is
unstable, satisfies $\Crit(\bar\phi)=3\{z=0\}$, and is linearly
fibered over~$\PP^1$ by the map $\pi(x:y:z)=[x:z]$.
\end{remark}

%%%%%%%%%%%%%%%%%%%%%%%%%%%%%%%%%%%%%%%%%%%%%%%%%%%%%%%%%%%%%%%%%%%%%%

%%%%%%%%%%%%%%%%%%%%%%%%%%%%%%%%%%%%%%%%%%%%%%%%%%%%%%%%%%%%%%%%%%%%%%
\end{document}